\newtheorem{theorem}{Theorem}
\newtheorem{corollary}{Corollary}
\newtheorem{lemma}{Lemma}
\theoremstyle{plain}
\newtheorem{assumption}{Assumption}
\theoremstyle{plain}
\begin{document}

\title[]{Hitting times in the stochastic block model}

\author[]{Andrea Ottolini}
\address[]{Department of Mathematics, University of Washington, Seattle, WA 98195, USA}
\email{ottolini@uw.edu}

\subjclass[]{60J10, 05C80, 60B20} 
\keywords{}
\thanks{}

\begin{abstract} 
Given a large connected graph $G=(V,E)$, and two vertices $w,\neq v$, let $T_{w,v}$ be the first hitting time to $v$ starting from $w$ for the simple random walk on $G$. We prove a general theorem that guarantees, under some assumptions on $G$, to approximate $\mathbb E[T_{w,v}]$ up to $o(1)$ terms. As a corollary, we derive explicit formulas for the stochastic block model with two communities and connectivity parameters $p$ and $q$, and show that the average hitting times, for fixed $v$ and as $w$ varies, concentrates around four possible values. The proof is purely probabilistic and make use of a coupling argument.
\end{abstract}

\maketitle
\section{Introduction}
\subsection{The Erd\H{o}s-Rényi case} 
Let $G$ be a simple connected graph, equipped with simple random walk. Given two vertices $w\neq v$, let $T_{w,v}$ be the first time that simple random walk, starting from $w$, hits $v$. Our goal is to understand $\mathbb E[T_{w,v}]$, which we refer to as the average hitting time from $w$ to $v$. Our motivation comes from a recent joint work with Steinerberger \cite{ott}, where we show that if $G$ is sampled according to the Erd\H{o}s-Rényi model with parameters $n$ (large) and $p$ (fixed), then there is an almost exact formula: with high probability, for every pair $w\neq v\in V$
\begin{equation}\label{steiner}
\mathbb E[T_{w,v}]=\frac{2|E|}{\deg v}-1+\frac{1}{p}1_{w\not\sim v}+\mathcal O\left(\sqrt\frac{\log^3 n}{n}\,\right).
\end{equation}
Here, $E$ denotes the number of edges, $\deg(v)$ the number of neighbors of $v$, and $1_{w\not\sim v}$ the indicator that $w$ and $v$ are not adjacent. The implicit constant in the error term depends on $p$, and deteriorates as $p\rightarrow 0$. One way to interpret this result is that the hitting times concentrates around two values, and such a concentration can be used to recover the adjacency structure of the graphs. \\ Our main result, Theorem \ref{thm:main}, allows to derive similar formulas in a variety of situations. As a first example, we obtain the following corollary that extends \eqref{steiner} to the case where $p\rightarrow 0$. 
\begin{corollary}\label{thm:er}
Let $G=(V,E)$ be an Erd\H{o}s-Rényi random graph with parameters $n$ and $p=p_n$ where 
$\log^5 n\cdot n^{-1}\cdot p^{-8}\rightarrow 0.
$
Then, with high probability, we have that for every pair $w\neq v$ the hitting time $T_{w,v}$ satisfies 
\begin{equation}\label{sharpenstein}
\mathbb E[T_{w,v}]=\frac{2|E|}{\deg v}-1+\frac{X+Y}{X}1_{w\not\sim v}+o(1)
\end{equation}
where, conditional on $\deg v$, the random variables $X$ and $Y$ are independent and 
\begin{align*}
X\sim \emph{Bin} \left(\deg v(n-\deg v-1),p\right)\, \quad Y\sim \emph{Bin}\left((n-\deg v-1)(n-\deg v-2),p\right).
\end{align*} 
\end{corollary}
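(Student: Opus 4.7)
The plan is to derive Corollary \ref{thm:er} directly from the general result Theorem \ref{thm:main}. Fix $v$, set $d=\deg v$, and split $V\setminus\{v\}$ into $A := N(v)$ of size $d$ and $B := V\setminus(N(v)\cup\{v\})$ of size $n-d-1$. The edges of the Erdős-Rényi graph decompose into four independent Bernoulli$(p)$ families on $\{v\}\times(V\setminus\{v\})$, $A\times A$, $A\times B$, and $B\times B$. All the local statistics the hypotheses of Theorem \ref{thm:main} presumably require---degrees, co-degrees, sizes of edge sets between prescribed subsets, uniform control of neighborhood structure for every vertex---are sums of independent Bernoullis and concentrate by standard Chernoff/Bennett estimates. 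The scaling $\log^5 n\cdot n^{-1}\cdot p^{-8}\to 0$ is chosen precisely so that these bounds survive both a union bound over the $n(n-1)$ pairs $(w,v)$ and their propagation through ratios such as $(X+Y)/X$, whose sensitivity costs several powers of $1/p$.

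Granting Theorem \ref{thm:main}, $\mathbb{E}[T_{w,v}]$ depends, up to $o(1)$, only on whether $w\in A$ or $w\in B$; denote the two values by $H_A$ and $H_B$. The stationarity identity $\mathbb{E}_v[T_v^+]=2|E|/\deg v$ combined with first-step analysis at $v$ gives $2|E|/\deg v = 1+(1/\deg v)\sum_{a\in A}\mathbb{E}_a[T_v] \approx 1+H_A$, hence $H_A=2|E|/\deg v-1$. For $H_B$, first-step analysis at a typical $b\in B$, with $W_b := |N(b)\cap A|$ and $U_b := |N(b)\cap B|$, yields $(W_b+U_b)H_B \approx W_b+U_b+W_bH_A+U_bH_B$, i.e.\ $H_B-H_A\approx (W_b+U_b)/W_b$. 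Weighting by $W_b$ and summing over $b\in B$ eliminates the per-vertex noise and produces
\[
H_B - H_A \;\approx\; \frac{\sum_{b\in B}\deg b}{\sum_{b\in B}W_b} \;=\; \frac{|E(A,B)|+2|E(B)|}{|E(A,B)|}.
\]

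It remains to match the distributions. Set $X := |E(A,B)|$; conditionally on $d$, this is exactly $\mathrm{Bin}(d(n-d-1),p)$ and, since it depends on a different family of potential edges than $|E(B)|$, it is independent of $|E(B)|$ conditionally on $d$. The graph quantity $2|E(B)|$ has the same expectation $p(n-d-1)(n-d-2)$ as the Binomial $Y$ of the statement, and its fluctuation around that mean, once divided by $X\asymp p^2 n^2$, is itself $o(1)$ under the given scaling; so replacing $2|E(B)|$ by an independent $Y\sim\mathrm{Bin}((n-d-1)(n-d-2),p)$ yields the form $(X+Y)/X$ of the statement with only an $o(1)$ error.

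The main obstacle, which is encoded inside Theorem \ref{thm:main} rather than in this corollary, is the within-type concentration of $\mathbb{E}_w[T_v]$ used above. I expect this to be the content of the coupling argument advertised in the abstract: couple two random walks started at distinct same-type vertices so that they merge in $o(n)$ steps with high probability, while the hitting time to $v$ is itself of order $n$; bootstrapping this merging estimate into uniform control of $\mathbb{E}_w[T_v]$ over all $w\neq v$ and all $v\in V$ is what ultimately forces the $p^{-8}$ exponent in the hypothesis.
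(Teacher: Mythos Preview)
Your proposal is correct and follows essentially the same route as the paper: the same three-block partition $\{v\},\,A=N(v),\,B$, the same appeal to Theorem~\ref{thm:main} to reduce to the lumped chain, the return-time identity for $H_A$, and the identification $H_B-H_A=1/q_{2,1}=(X+2|E(B)|)/X$. Your degree-weighted summation over $b\in B$ is exactly the computation of $q_{2,1}=C_{2,1}/C_2$ in disguise, so there is no real divergence there. Two small corrections are worth noting. First, the paper does not leave the verification of Assumption~\ref{ass} at ``presumably'': it invokes Lemma~\ref{mariangela} with $D=p$, $\alpha=p$, $\delta=O(\sqrt{\log n/(np)})$ to obtain $K_1\le 2+o(1)$, $K_2\le (1+o(1))/p$, $\lambda\le 1-p+o(p)$, and a separate Chernoff argument to get $\varepsilon=O(\sqrt{\log n/(np^2)})$. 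Second, your guess about the origin of the exponent $p^{-8}$ is off: it does not come from a union bound over pairs $(w,v)$ but from substituting these parameters into the bound of Theorem~\ref{thm:main}, which produces $K_1K_2\varepsilon\cdot(\log(\cdot)/\log\lambda)^2=O\bigl(\sqrt{\log^5 n/(np^8)}\bigr)$. Finally, you are right to flag that $2|E(B)|$ is not literally $\mathrm{Bin}((n-d-1)(n-d-2),p)$ but only matches it up to $o(1)$ after division by $X$; the paper glosses over this point.
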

The restriction on $p$ is likely an artifex of the proof, though there is a real obstacle at $p=n^{-1/2}$, which is the regime at which the diameter of a typical graph changes from $2$ to $3$. In first approximation, we have 
$$
\frac{X+Y}{X}\approx \frac{n}{\deg v}\approx \frac{1}{p}
$$
thus recovering the result in \cite{ott} when $p$ is fixed. In the regime of $p$ where Corollary \ref{thm:er}, this is also a refinement of the results in \cite{lowe, lowe2, lowe3}. In fact, they prove a law of large numbers and central limit theorem for $\mathbb E[T_{w,v}]$ in the case where $w$ is distributed according to the stationary distributed -- confirming a prediction from the physics literature \cite{sood}. Provided that $p$ decays according to \eqref{thm:er}, their result can be recovered from the leading term in \eqref{sharpenstein} using asymptotic of binomial coefficients.
\subsection{Stochastic block model}
In many situations, the Erd\H{o}s-Rényi model is unrealistic as it assumes that each pair of vertices interact with the same probability. A widely used variation, the stochastic block model \cite{abbe}, takes into account for situations where vertices can be partitioned into sets that we call communities. We write $w\simeq v$ to denote that $w$ and $v$ are in the same community. While what follows can be substantially generalized, we focus here on the case of two communities of size $n$ each, with each pair of vertices in the same community being connected with probability $p$, and each pair of vertices in different communities being connected with probability $q$. We call the resulting measure on graphs on $2n$ vertices $\mathcal G_{n,p,q}$. Notice that the case $p=q$ corresponds to the Erd\H{o}s-Rényi model with parameters $2n$ and $p$. An instance of $\mathbb E[T_{w,v}]$ for $G\sim G_{1000,0.8,0.2}$, for fixed $v$ and $w$ that ranges on $w\neq v$, is given in Figure \ref{fig1}.
\begin{center}\label{fig1}
\begin{figure}[h!]
{\includegraphics[width=0.8\textwidth]{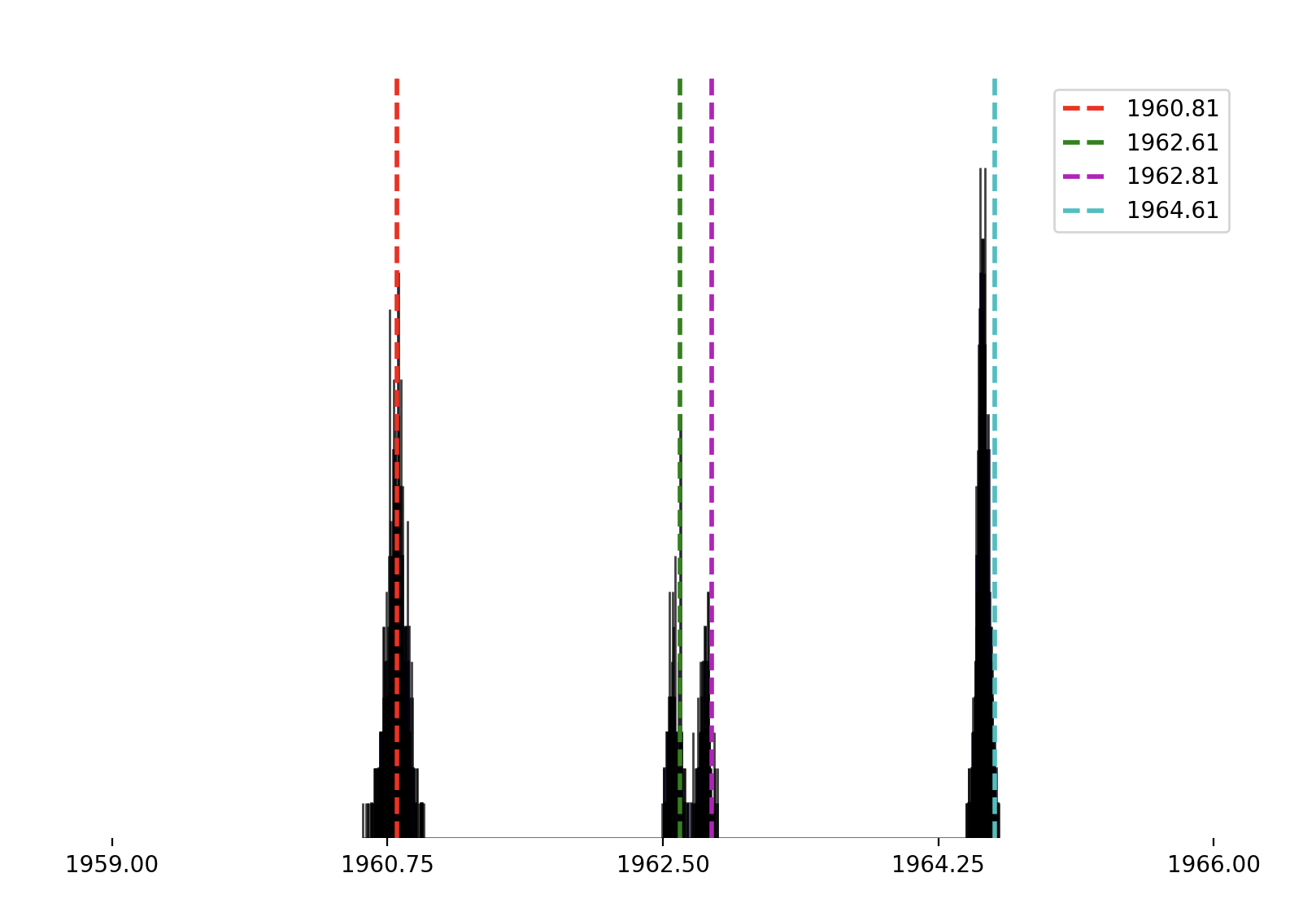}}
\caption{Average hitting times for an instance of the stochastic block model from different starting points. They cluster around four values that are predicted by our result (Theorem \ref{thm:sbm}). The clusters reflect the adjacency and community structure of the graph. 
}
\end{figure}
\end{center}

Recently, Löwe and Terveer \cite{lowe4} have shown a law of large numbers and a central limit theorem for $\mathbb E[T_{w,v}]$, when $w$ is stationary distributed. Their result is applicable in a wide regime -- allowing for multiple communities and $p, q$ that vary with $n$ -- but fails to capture the phenomenon in Figure \eqref{fig1}. An explanation if instead given by a corollary of our main Theorem \ref{thm:main}.
\begin{corollary}\label{thm:sbm}
Let $(V,E)=G\sim \mathcal G_{n,p,q}$ be a graph sampled according to the stochastic block model with $0<p,q<1$ fixed. Then, with high probability, we have that for every pair $w\neq v$ the hitting time $T_{w,v}$ satisfies
$$
\mathbb E[T_{w,v}]=\frac{2|E|}{\deg v}-1+\frac{2}{p+q}1_{w\not\sim v}
+
 \begin{cases}
-\frac{(q-p)^2}{(p+q)^2} \quad &\mbox{if}~w\simeq v \\
+\frac{p(q-p)^2}{q(p+q)^2} \quad &\mbox{if}~w\not\simeq v 
\end{cases} \quad
+\mathcal O_{p,q}\left(\sqrt{\frac{\log^5 n}{n}}\,\right)
$$
\end{corollary}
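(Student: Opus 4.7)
The plan is to derive the formula from the main theorem (Theorem 1), in close analogy with the Erdős–Rényi case of Corollary 1. Theorem 1 presumably provides, under structural hypotheses on a graph $G$, a formula of the shape
$$
\mathbb E[T_{w,v}] = \frac{2|E|}{\deg v} - 1 + F(w,v;G)\cdot 1_{w\not\sim v} + o(1),
$$
where $F$ is an explicit local functional in a neighborhood of $v$, with the $o(1)$ quantified in terms of mixing or expansion data of $G$.

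First I would verify that $G\sim\mathcal G_{n,p,q}$ meets the hypotheses of Theorem 1 with high probability and uniformly over $(w,v)$. For fixed $0<p,q<1$ this reduces to standard facts: diameter $\le 2$ by a union bound on missing paths of length two; degree concentration $\deg u = n(p+q)(1+o(1))$ simultaneously in $u$; and codegree concentration of $|N(u)\cap N(u')|$ for every pair $u,u'$. All three are routine Chernoff estimates combined with a union bound.

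Next I would compute $F(w,v;G)$ explicitly in the SBM. In the Erdős–Rényi setting $F=(X+Y)/X$, with $X,Y$ binomial counts of edges between certain vertex subsets, all with rate $p$. In the SBM the relevant vertex subsets split into four pieces according to community: writing $N(v)\cap C_i$ and $(C_i\setminus N(v))\setminus\{v\}$ for $i=1,2$, each edge count becomes a sum of four independent binomials of rates $p$ or $q$, depending on whether its endpoints lie in the same or different communities. Crucially, which piece $w$ belongs to is itself community-sensitive, so the resulting ratio depends on whether $w\simeq v$ or $w\not\simeq v$; this is the origin of the piecewise term in the statement.

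Concentrating each binomial via Chernoff and taking ratios, using the approximate sizes $|N(v)\cap C_1|\approx np$, $|N(v)\cap C_2|\approx nq$ (and analogously $n(1-p)$, $n(1-q)$ for the non-neighborhood), a direct algebraic computation yields the coefficient $\frac{2}{p+q}$ of $1_{w\not\sim v}$ together with one of the two community-dependent corrections stated in the corollary. That these corrections vanish when $p=q$ is a useful sanity check, reducing Corollary 2 to Corollary 1. The main obstacle is precisely this bookkeeping: one must carry four binomials of distinct means through a ratio calculation without losing any of the $(q-p)^2$ factors that distinguish the two cases. The explicit error rate $\sqrt{\log^5 n / n}$ should then be inherited directly from Theorem 1, since the binomials involved here are of size $\Theta(n^2)$ and their own fluctuations contribute a polylogarithmically smaller error after union-bounding over $(w,v)$.
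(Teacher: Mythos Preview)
Your proposal rests on a mistaken guess about the shape of Theorem~\ref{thm:main}. It does \emph{not} output a formula of the form $\frac{2|E|}{\deg v}-1+F(w,v;G)\,1_{w\not\sim v}+o(1)$ with $F$ a local edge-count ratio. Rather, it says that once you fix a partition $\mathcal V=\{\{v\},V_1,\dots,V_m\}$ of $V$ satisfying Assumption~\ref{ass}, the hitting time $\mathbb E[T_{w,v}]$ agrees with the hitting time $\mathbb E[T'_{i,0}]$ of a \emph{quotient} weighted chain on $\mathcal V$, up to the stated error. In the Erd\H os--R\'enyi corollary the partition has only three parts ($\{v\}$, neighbors, non-neighbors), so the quotient hitting-time equations collapse to the ratio $(X+Y)/X$; this is a special accident of $m=2$, not the content of the theorem.

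For the stochastic block model the correct partition has \emph{five} parts: $\{v\}$, neighbors of $v$ in the same community, neighbors in the other community, non-neighbors in the same community, non-neighbors in the other community. After checking Assumption~\ref{ass} via Chernoff (as you outline) and invoking Lemma~\ref{mariangela} for $K_1,K_2,\lambda$, Theorem~\ref{thm:main} reduces the problem to computing $\mathbb E[T'_{i,0}]$ for $i=1,\dots,4$ on an explicit $5\times5$ weighted chain. These four numbers are \emph{not} obtainable as a single ratio of binomial edge counts; one writes $T'_{i,0}=\frac{2|E|}{\deg v}-1+\tilde T_i$ and solves a $4\times4$ linear system $(\mathrm{Id}-B_{p,q})\tilde T=b$ for the correction vector $\tilde T$. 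It is this linear algebra, not a ratio computation, that produces the two distinct constants $-\frac{(q-p)^2}{(p+q)^2}$ and $\frac{p(q-p)^2}{q(p+q)^2}$. Note also that your template $F\cdot 1_{w\not\sim v}$ cannot be right on its face: the community-dependent correction in the statement is present even when $w\sim v$, so the answer already has four clusters, not two.
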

While some of the logarthmic factors may be a byproduct of the proof, we believe that the $\sqrt n$ behavior in the error term cannot be removed. One could also allow to vary $p, q$ with $n$, along the lines of Corollary \ref{thm:er}, though we will not pursue this direction. The case of more communities can also be handled using our main result. An illustration of the case with three communities is given in \eqref{fig2}. 
\begin{center}\label{fig2}
\begin{figure}[h!]
{\includegraphics[width=0.6\textwidth]{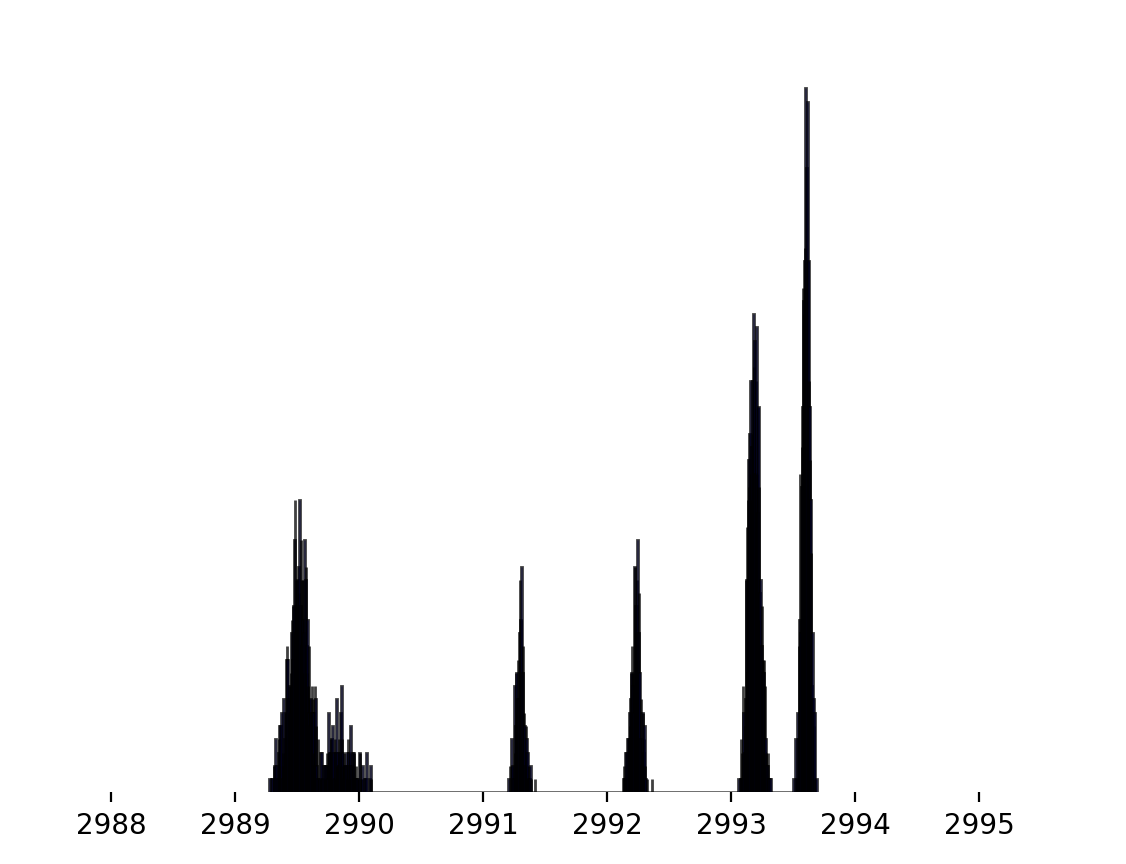}}
\caption{The average hitting times to a given vertex $v$ for an instance of the stochastic block model on three communities of a $1000$ individuals each. The hitting times cluster around six values. 
}
\end{figure}
\end{center}
As discussed in \cite{ott}, hitting times are closely related to other relevant quantities for the graph $G$, including spectral data \cite{persi, lov}, effective resistances \cite{kir}, and the uniform measure on the set of spanning tree\cite{lyo}. There is also a neat connection with the problem of community detection, that has attracted a lot of interest in the last two decades with a large body of literature in mathematics, physics, biology, computer science, see \cite{abbe, berg, sourav, bho, clauset, Ni, sax} and references therein. Classical approaches require an understanding of the spectrum of the adjacency matrix \cite{sax}, or are based on curvature methods \cite{Ni}. Our Corollary \ref{thm:sbm} illustrate a simple way to perform community detection in a stochastic block model by means of observing random messages travelling among individuals. 

\section{Main result}
\subsection{Set-up}
We consider a connected weighted graph $G$ with vertex set $V$ and non-negative edge weights $c_{w,w'}$ for each pair $1\leq w,w'\leq n$. We assume that the vertex set $V$ can be partitioned into $m+1$ disjoint parts $$\mathcal V=\{\{v\}=V_0, V_1, \ldots, V_m\},$$ 
which we use to define a new weighted graph with vertex set $\mathcal V$.
Given the symmetric weights $c_{w,w'}$ on the original graph, we define for $1\leq i,j\leq m$
$$
C_{ij}:=\sum_{w\in V_i, w'\in V_j}c_{w,w'}.
$$
This gives a weighted graph structure $\mathcal G$ on $\mathcal V$. We define 
$$
c_w:=\sum_{w'\in V}c_{w,w'},\quad C_{i}:=\sum_{0\leq j\leq m}C_{i,j}=\sum_{w'\in V_i}c_{w'}
$$
for the total weights at each vertex. We then introduce the random walks $X_k$ on $G$ and $Y_k$ on $\mathcal G$ with transition probabilities, respectively,
$$
p_{w,w'}:=\frac{c_{w,w'}}{c_w}, \quad q_{i,j}:=\frac{C_{i,j}}{C_i}.
$$
In particular, the $q_{i,j}s$ are a convex combination of the $p_{w, w'}$ and 
\begin{equation}\label{convextrick}
\min_{w\in V_i, w'\in V_j} p_{w,w'} \leq q_{i,j}\leq \max_{w\in V_i,w'\in V_j} p_{w,w'}
\end{equation}
We write $\mu(k,w)$ for the probability distribution of the random walk $X_k$ after $k$ steps, starting from $w$. Similarly, we denote by $\nu(k,i)$ the probability distribution of the random walk $Y_k$ after $k$ steps, starting from $V_i$. We will abuse notation and write $Y_k=i$ for $Y_k=V_i$. Owing to the connectedness assumption, the stationary distributions are unique and given by $\mu$ and $\nu$, where
\begin{equation}\label{def:stationary}
\mu_w:=\frac{c_w}{\sum_{w'\in V} c_{w'}}, \quad \nu_i:=\frac{C_i}{\sum_{0\leq j\leq m} C_j}
\end{equation}
We now define the hitting times
$$
T_{w,v}=\{\min k\geq 0: X_k=v, X_0=w\}, \quad T'_{i,0}=\{\min k\geq 0: Y_k=0, Y_0=i\},
$$
Our goal is to show that we can approximate $T_{w,v}$ with $T_{i,0}$ whenever $w\in V_i$. To do so, we will make the following assumption.
\begin{assumption}\label{ass}
The graph $G$ satisfies the following bounds.
\begin{enumerate}
\item
There exists $\varepsilon\in [0,1)$ such that for all $0\leq i, j\leq m$ and all $w\in V_i$,
\begin{equation}\label{ass1}
\left|\sum_{w'\in V_j}p_{w,w'}-q_{i,j}\right|\leq \varepsilon \min_{w''\in V_i}\sum_{w'\in V_j}p_{w'',w'}.
\end{equation}
\item 
There exists $K_1, K_2>0$ such that 
\begin{equation}\label{ass2}
\max_{w'\in V, 0\leq j\leq m}\left(\mathbb E[T_{w',v}], \mathbb E[T'_{j,0}] \right)\leq K_1|V|, \quad \max_{w'\sim v}p_{w,v}\leq \frac{K_2}{|V|}
\end{equation}
\item 
If $d_{\emph{TV}}$ denote the total variation distance between two probability measures, then we have for some $\lambda\in [0,1)$
\begin{equation}\label{ass3}
\max_{w\in V} d_{\emph{TV}}(\mu(1,w), \mu)\leq \lambda.
\end{equation}
\end{enumerate}
\end{assumption}
In many situations, that include the stochastic block model $\mathcal G_{n,p,q}$ with general parameters $0<p,q<1$ fixed, we can choose $\mathcal V$ to be small, the parameters in the assumptions are easy to estimate, and to do so does not require any knowledge about spectral data. Some generic conditions are shown in Lemma \eqref{mariangela}. In fact, one can also allow $p$ and/or $q$, to vary with $n$, as we show in the proof of Corollary \eqref{thm:er}. We now proceed with a brief discussion of the meaning of the parameters.\\ 
The first assumption allows to conclude that
\begin{equation}\label{convenient}
(1-\varepsilon)\mathbb P(X_k\in V_j|X_{k-1}=w)\leq \mathbb P(Y_k=j|Y_{k-1}=i)
\end{equation}
for all $i, j\in\{0,\ldots, m\}$ and $w\in V_i$. In particular, if we define $\Gamma$ via
$$
\Gamma:V\rightarrow \{0,1\ldots, m\}, \qquad \Gamma(w)=i\quad  \Leftrightarrow \quad w\in V_i,
$$
we deduce that we can couple $\Gamma(X_k)$ and $Y_k$ so that if $\Gamma(X_{k-1})=Y_{k-1}$, then they will remain equal with probability at least $1-\varepsilon$.\\ 
The second assumption is an a priori bound on the hitting of vertex $v$ and on the chance of hitting $v$ from its neighbor. Notice that if $K_1$ and $K_2$ are of the same order, it implies that in first approximation the average hitting times have the same size, regardless of the starting position. This is the case for the stochastic block model with arbitrary number of communities, as long as the connectivity parameters are bounded away from zero.\\ 
The third assumption is a mixing condition, that guarantees that $X_k$ can be coupled with a stationary walker with probability at least $\lambda$ at each step. It is referred to as a Dobrushin-type of condition in the statistical physics community \cite{ruelle}. It is also closely related to the optimal transport-based notion of curvature for the Markov chain $X_k$ known as Ollivier-Ricci curvature \cite{oll}.\\
\subsection{Main result}
We are now ready to state our main result. In a nutshell, it shows that $T_{w,v}$ and $T'_{i,0}$ behave similarly up to an error term governed by Assumption \ref{ass}. The bound is explicit, and allows for neat formulas for $T_{w,v}$ for large structured graphs. 
\begin{theorem}\label{thm:main}
Let $G$ be a connected weighted graph satisfying Assumption \ref{ass}. Then, for all $1\leq i\leq m$ and $w\in V_i$, we have
$$
\left|\mathbb E[T_{w,v}]-\mathbb E[T'_{i,0}]\right|\leq \frac{16K_1K_2\varepsilon}{\log^2\lambda}\log^2\left(\varepsilon \cdot \frac{K_2}{|V|}\cdot (1-\sqrt{\lambda})\right)
$$
\end{theorem}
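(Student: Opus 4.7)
The plan is to combine the classical fundamental-matrix formula for hitting times with a synchronous coupling supplied by Assumption \ref{ass}(1). Because $V_0=\{v\}$, a direct calculation gives $\mu_v=\nu_0$ (since $C_0=c_v$ and $\sum_j C_j=\sum_w c_w$) and $1/\mu_v=\mathbb E_v[T_v^+]\le 1+K_1|V|$ by Assumption \ref{ass}(2). Writing $\mathbb E_w[T_v]=\mu_v^{-1}\sum_{k\ge 0}(P^k(v,v)-P^k(w,v))$ and the analogous identity for $Y$, and subtracting, the theorem reduces to bounding
$$
\Sigma_{w,i}\;:=\;\sum_{k=0}^{\infty}\bigl|P^k(w,v)-Q^k(i,0)\bigr|
$$
(and the analogue $\Sigma_{v,0}$) by a quantity of order $\varepsilon K_2 N^2/|V|$, with $N$ as in the statement.

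For each $k$, and for any coupling $(X_k,Y_k)$ of the two walks, one has the marginal inequality $|P^k(w,v)-Q^k(i,0)|\le \mathbb P(1_{X_k=v}\ne 1_{Y_k=0})$. I will use the synchronous coupling: whenever $\Gamma(X_{k-1})=Y_{k-1}$, specializing \eqref{ass1} to $V_j=V_0=\{v\}$ and combining with Assumption \ref{ass}(2) gives $|p_{X_{k-1},v}-q_{Y_{k-1},0}|\le \varepsilon K_2/|V|$, so the optimal one-step coupling of the Bernoulli pair $(1_{X_k=v},1_{Y_k=0})$ mismatches with probability at most $\varepsilon K_2/|V|$. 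When synchronization has already broken -- an event of probability at most $(k-1)\varepsilon$ by a union bound over the prior steps, each of sync-failure probability $\le \varepsilon$ -- the crude bound $2K_2/|V|$ still applies. Combining these two contributions,
$$
\mathbb P(1_{X_k=v}\ne 1_{Y_k=0})\ \le\ (2k-1)\,\varepsilon\,K_2/|V|.
$$
On the other hand, Assumption \ref{ass}(3) together with the projection $\nu=\mu\circ\Gamma^{-1}$ (which yields $d_{TV}(\nu(1,i),\nu)\le \lambda$ by a contraction argument) gives the mixing estimate $|P^k(w,v)-\mu_v|,|Q^k(i,0)-\nu_0|\le \lambda^k$, hence $|P^k(w,v)-Q^k(i,0)|\le 2\lambda^k$.

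Taking the minimum of the two bounds and summing over $k$, the series splits at the crossover $N\sim\log(\varepsilon K_2/|V|\cdot(1-\sqrt\lambda))/\log\lambda$ that matches the statement. The coupling piece contributes $\sum_{k\le N}(2k-1)\varepsilon K_2/|V|=N^2\varepsilon K_2/|V|$, and the geometric tail $\sum_{k>N}2\lambda^k\le 2\lambda^N/(1-\lambda)$ is, thanks to the $(1-\sqrt\lambda)$ factor in the definition of $N$, absorbed into a constant multiple of the first piece. Multiplying by the prefactor $1/\mu_v\le 2K_1|V|$ produces $\Sigma_{w,i}/\mu_v=O(K_1K_2\varepsilon N^2)$; the same argument applied at $(v,0)$ (where sync holds automatically since $\Gamma(v)=0$) handles $\Sigma_{v,0}$, and together they yield the claimed bound with absolute constant $16$.

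The main obstacle is the intermediate regime $1\le k\le N$: one must use simultaneously that hitting $v$ is rare (probability $\le K_2/|V|$ per step, Assumption \ref{ass}(2)) and that the synchronous coupling is stable (failure probability $\le \varepsilon$ per step, Assumption \ref{ass}(1)) in order to beat the naive per-step error $2K_2/|V|$ by the additional factor $\varepsilon$; without Assumption \ref{ass}(1), only the crude bound is available and the final rate would be far worse. Keeping the sharp $(1-\sqrt\lambda)$ inside the logarithm rather than the weaker $1-\lambda$ requires some care in balancing $N^2\varepsilon K_2/|V|$ against the geometric tail, but remarkably no explicit re-synchronization mechanism is needed, because the mixing bound $2\lambda^k$ takes over before the cumulative sync-failure probability grows past $O(1)$.
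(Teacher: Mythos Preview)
Your approach is correct and genuinely different from the paper's. The paper builds a \emph{single} trajectory coupling of $(X_k)$ and $(Y_k)$, with an explicit re-synchronization mechanism: it introduces geometric times $\tau_d\sim\text{Geo}(\varepsilon)$ (first desync) and $\tau_c,\tau_c'\sim\text{Geo}(1-\lambda)$ (coupling of each chain with its stationary version), proves the inclusion $\{T\neq T'\}\subset\{\tau_d\le \min(T,T')<\tau_c+\tau_c'\}$, bounds $\mathbb P(T\neq T')$ by splitting at a threshold $z$, and finishes via $|\mathbb E[T]-\mathbb E[T']|\le K_1|V|\,\mathbb P(T\neq T')$. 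You instead invoke the fundamental-matrix identity $\mathbb E_w[T_v]=\mu_v^{-1}\sum_k(P^k(v,v)-P^k(w,v))$ (together with $\mu_v=\nu_0$) and bound each summand $|P^k(w,v)-Q^k(i,0)|$ separately, using for each fixed $k$ a \emph{$k$-dependent} coupling: synchronous for the first $k-1$ steps, then an optimal Bernoulli coupling of the hit/no-hit indicators at step $k$. This cleanly avoids any re-synchronization argument, since for large $k$ you switch to the mixing bound $2\lambda^k$ directly. Two remarks on what each route buys. First, the paper's trajectory coupling gives more than the stated theorem: it shows $T$ and $T'$ are close in $L^1$, hence in distribution, whereas your argument controls only expectations. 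Second, your tail is $\lambda^k$ rather than the paper's $(\sqrt\lambda)^k$ (which arises because the paper must wait for \emph{two} geometric times $\tau_c+\tau_c'$); consequently your optimization naturally produces $(1-\lambda)$ inside the logarithm, which is a \emph{stronger} bound than the paper's $(1-\sqrt\lambda)$---you have this backwards in your final paragraph. The exact constant $16$ is asserted rather than tracked, but the orders match and your bound in fact dominates the paper's.
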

In fact, the result is stronger, as the proof provides a coupling between the hitting times that is small in the $L^1$ sense, thus providing convergence in distribution whenever the right side vanishes. In many applications, $K_1, K_2$ and $\lambda$ are positive constants, while $\varepsilon\rightarrow 0$ sufficiently fast. In such cases, the expectations of $T_{w,v}$ and $T'_{i,0}$ coincide up to $o(1)$ factors. The latter is typically much easier to estimate, as we will see in the proofs of Corollaries \ref{thm:er} and \ref{thm:sbm}.
\section{Main ingredients}
\subsection{A useful lemma}
In order to apply Theorem \eqref{thm:main} in concrete settings, we need good and practical bounds for $\lambda$ and $K_1, K_2$ given by Assumption \eqref{ass}. The following lemma will suffice for our purposes. Versions of them are known in the literature, but we report them with proofs for the sake of completeness and to tailor them with an eye to our applications. 
\begin{lemma}\label{mariangela}
Consider a connected graph $G$ on $n$ vertices satisfying Assumption \eqref{ass}. Assume that there exists $0\leq \delta<1$ and $0<D<1$ such that 
    $$
    (1-\delta)D\leq \frac{\deg(w)}{n}\leq (1+\delta)D, \quad \frac{(1+\delta)^2}{(1-\delta)}D\leq 1.
    $$
    for all $w\in V$, and that for $w\neq v$ we can bound for some $\alpha>0$
    $$
   \frac{|w': w'\sim w, w\sim v|}{\deg w}\geq \alpha.
   $$
 Then, we have the bounds 
    $$
    K_1\leq \frac{2D}{\alpha}(1+\delta), \quad K_2\leq \frac{1}{D(1-\delta)}, \quad \lambda\leq 1-D+o(\delta).
    $$
\end{lemma}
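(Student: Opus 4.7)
The plan is to prove the three bounds separately; they decouple and each follows from a short direct calculation from the degree and common-neighbor hypotheses.

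The bound on $K_2$ is immediate. Any neighbor $w'$ of $v$ has $\deg w' \geq (1-\delta)Dn$, hence $p_{w',v} = 1/\deg w' \leq 1/((1-\delta)Dn)$. Since $|V|=n$, this gives $K_2 \leq 1/(D(1-\delta))$.

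For the bound on $\lambda$, I would use the classical identity
$$
d_{\mathrm{TV}}(\mu(1,w),\mu) \;=\; 1 - \sum_{w'\in V} \min\bigl(\mu(1,w)(w'),\mu_{w'}\bigr).
$$
The role of the side condition $(1+\delta)^2 D/(1-\delta) \leq 1$ is precisely to force, for each neighbor $w'\sim w$, the domination $\mu_{w'} = \deg w'/(2|E|) \leq (1+\delta)/((1-\delta)n) \leq 1/((1+\delta)Dn) \leq 1/\deg w$, so that the minimum above equals $\mu_{w'}$. Summing over the $\deg w$ neighbors of $w$ yields
$$
1 - d_{\mathrm{TV}}(\mu(1,w),\mu) \;\geq\; \deg w \cdot \frac{(1-\delta)Dn}{2|E|} \;\geq\; \frac{(1-\delta)^2}{1+\delta}\,D,
$$
and a Taylor expansion in $\delta$ gives the claimed $\lambda \leq 1 - D + o(1)$.

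The bound on $K_1$ is the main content. For the original chain, I would use the common-neighbor hypothesis in a two-step argument: starting from any $w\neq v$, with probability at least $\alpha$ the step $X_1$ lands on a common neighbor of $w$ and $v$, from which $\mathbb{P}(X_2=v\mid X_1)=1/\deg X_1 \geq 1/((1+\delta)Dn)$. Hence $\mathbb{P}_w(X_2 = v) \geq \alpha/((1+\delta)Dn)$, and partitioning time into successive blocks of length two gives $\mathbb{E}_w[T_v] \leq 2(1+\delta)Dn/\alpha$, i.e.\ $K_1 \leq 2D(1+\delta)/\alpha$. For the quotient chain I would transfer this estimate via \eqref{convenient}: combining $q_{i,j} \geq (1-\varepsilon)\sum_{w'\in V_j} p_{w,w'}$ with $q_{j,0}\geq (1-\varepsilon)\max_{w'\in V_j} p_{w',v}$ (both instances of \eqref{convenient}) gives, for any $w \in V_i$,
$$
\sum_j q_{i,j}\,q_{j,0} \;\geq\; (1-\varepsilon)^2 \sum_{j}\sum_{w'\in V_j} p_{w,w'}\,p_{w',v} \;\geq\; (1-\varepsilon)^2\,\frac{\alpha}{(1+\delta)Dn},
$$
so the same two-step argument applies to $Y_k$, with a harmless $(1-\varepsilon)^2$ factor absorbed into the constant.

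The main obstacle is really just the bookkeeping in the $\lambda$ estimate: one must arrange for the side condition $(1+\delta)^2 D/(1-\delta)\leq 1$ to enter in exactly the place where the minimum in the total-variation formula must equal $\mu_{w'}$ rather than $1/\deg w$, since otherwise the lower bound collapses to $O(1/(Dn))$ instead of the stated constant $D-o(1)$. Everything else is a direct computation from the three hypotheses.
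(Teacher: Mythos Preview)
Your proposal is correct and follows essentially the same route as the paper: the $K_2$ bound is identical; the $K_1$ bound uses the same two-step argument (probability $\alpha$ of reaching a neighbor of $v$, then probability at least $1/((1+\delta)Dn)$ of hitting $v$), iterated via the Markov property; and the $\lambda$ bound rests on exactly the same observation that the side condition forces $\mu_{w'}\le 1/\deg w$ for $w'\sim w$.

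Two small differences are worth noting. For the $\lambda$ bound you compute the overlap $\sum_{w'}\min(\mu(1,w)_{w'},\mu_{w'})=\sum_{w'\sim w}\mu_{w'}$, whereas the paper computes the complementary sum $d_{\mathrm{TV}}=\sum_{w'\not\sim w}\mu_{w'}$ directly; these are of course equivalent. For the quotient chain's $K_1$ bound, the paper simply invokes \eqref{convextrick} to assert that the same $D,\delta,\alpha$ apply to $\mathcal G$, while you go through \eqref{convenient} and pick up an explicit $(1-\varepsilon)^2$ factor. Your route is arguably more transparent, and the extra factor is harmless in every application since $\varepsilon\to 0$; the paper's one-line transfer via \eqref{convextrick} is terser but requires the reader to unpack how the convex-combination structure propagates through the two-step estimate.
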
    
\begin{proof}
The statement regarding $K_2$ follows at once from the assumption on the degree. As for the bound on the hitting time, start from any $w'
\in V$. We have by assumption that there is a likelihood of at least $\alpha$ of reaching a neighbor of $v$ in one step, and a chance of at least $D^{-1}(1+\delta)^{-1}$ of hitting $v$ at the next step. In other words, 
$$
\mathbb P(T_{w', v}>2)\leq 1-\frac{\alpha}{D(1-\delta)}
$$
Using the Markov property, this gives
$$
\mathbb P(T_{w', v}>2k)\leq \left(1-\frac{\alpha}{nD(1+\delta)}\right)^k 
$$
and thus summing over $k$
$$
\mathbb E[T_{w', v}]\leq 2\frac{nD(1+\delta)}{\alpha}.
$$
as desired. Following the remark in \eqref{convextrick}, the same bounds hold for the graph $\mathcal G$ with the same quantities $D$, $\delta$ and $\alpha$.
\\ 
We move now to the bound on $\lambda$. Given the bounds on the degree, we can estimate
$$
(1-\delta)Dn^2\leq \sum_w \deg(w)\leq (1+\delta)Dn^2
$$
and thus we deduce that the stationary distribution $\mu$ satisfy 
$$
\frac{1}{n}\frac{1-\delta}{1+\delta}\leq \mu_w\leq \frac{1+\delta}{1-\delta}\frac{1}{n}.
$$
for all $w$. Given any other vertex $w'\sim w$, we have
$$
\mu_{w'}\leq \frac{1+\delta}{1-\delta}\frac{1}{n}\leq \frac{D(1+\delta)^2}{1-\delta}\frac{1}{\deg w}\leq \mu_{w'}(1,w)
$$
In particular, this implies that $\mu\leq \mu(1,w)$ for all $w'\sim w$. Therefore, by definition of total variation distance  
\begin{align*}
\lambda&=d_{\emph{TV}}(\mu, \mu(1,w'))=\sum_{w\not\sim w'}\mu_{w} \leq (n-\deg(w'))\max_w \mu_w \\&\leq n\left(1-D(1-\delta)\right)\frac{1}{n}\frac{1+\delta}{1-\delta}=1-D+o(\delta). 
\end{align*}
\end{proof}
\subsection{Relationship between the stationary distributions}
We now show a connection between the stationary distributions $\mu$ and $\nu$ for $X_k$ and $Y_k$, introduced in \eqref{def:stationary}. In what follows, we recall that $\Gamma$ denotes the map sending $w\in V_{i}$ to $i$. In most cases, the transition rates of $\Gamma(X_k)$ depend on the specific location $X_k$ we are at, though our Assumption \ref{ass} suggests that the dependence is mild. For stationary starting configurations we can say even more.
\begin{lemma}\label{lem:1}
Let $X_0\sim \mu$ and $Y_0\sim \nu$. Then, the pair $(\Gamma(X_k), \Gamma(X_{k+1}))$ is equal in distribution to $(Y_k, Y_{k+1})$ for all $k$. Moreover, 
$$
\max_i d_{\emph{TV}}(\nu(1,i),\nu)\leq\max _w d_{\emph{TV}}(\mu(1,w), \mu)
$$
\end{lemma}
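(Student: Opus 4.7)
The plan is to verify the two claims separately, both via direct calculation from the definitions \eqref{def:stationary}.

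For the first claim, I would set $Z=\sum_{w'\in V} c_{w'}$, so that $\mu_w=c_w/Z$ and $\nu_i=C_i/Z$. Pushing $\mu$ forward under $\Gamma$ gives
$$\mathbb P(\Gamma(X_0)=i)=\sum_{w\in V_i}\mu_w=\frac{C_i}{Z}=\nu_i,$$
so the marginals agree at time $0$. For the two-time joint law,
$$\mathbb P(\Gamma(X_0)=i,\Gamma(X_1)=j)=\sum_{w\in V_i}\sum_{w'\in V_j}\mu_w p_{w,w'}=\frac{1}{Z}\sum_{w\in V_i,\,w'\in V_j}c_{w,w'}=\frac{C_{i,j}}{Z}=\nu_i q_{i,j},$$
which matches $\mathbb P(Y_0=i,Y_1=j)$. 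Stationarity of $\mu$ under $X_k$ and of $\nu$ under $Y_k$ then extends this identity from $k=0$ to arbitrary $k$.

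For the second claim, set $\lambda=\max_w d_{\text{TV}}(\mu(1,w),\mu)$. The key observation, immediate from the definitions, is that $q_{i,j}$ is a convex combination of the ``escape probabilities'' $p_{w,V_j}:=\sum_{w'\in V_j}p_{w,w'}$ over $w\in V_i$, with weights $\mu_w/\nu_i$:
$$q_{i,j}=\frac{1}{C_i}\sum_{w\in V_i} c_w\, p_{w,V_j}=\sum_{w\in V_i}\frac{\mu_w}{\nu_i}\,p_{w,V_j}.$$
Since these weights sum to one, and since by the first part $\nu_j=\mu(V_j)$, the triangle inequality gives
$$d_{\text{TV}}(\nu(1,i),\nu)=\tfrac{1}{2}\sum_j|q_{i,j}-\nu_j|\leq\sum_{w\in V_i}\frac{\mu_w}{\nu_i}\cdot\tfrac{1}{2}\sum_j|p_{w,V_j}-\mu(V_j)|.$$
The inner quantity equals the total variation distance between the pushforwards $\Gamma_*\mu(1,w)$ and $\Gamma_*\mu$, which is at most $d_{\text{TV}}(\mu(1,w),\mu)\leq\lambda$, because total variation contracts under measurable pushforward (the supremum in the set-based definition is taken over a smaller family after pushing forward). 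Since the weights $\mu_w/\nu_i$ sum to $1$, this yields $d_{\text{TV}}(\nu(1,i),\nu)\leq\lambda$, uniformly in $i$.

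I do not anticipate any real obstacle: both claims reduce to bookkeeping with the definitions, the only nontrivial ingredient being the standard contraction property of total variation under measurable maps. The slight subtlety worth flagging is that the convex combination expressing $q_{i,j}$ uses precisely the weights $\mu_w/\nu_i$ that make $\nu_j=\mu(V_j)$ the corresponding combination of $\mu(V_j)$ across $w\in V_i$, which is what lets the triangle inequality separate cleanly before invoking the pushforward bound.
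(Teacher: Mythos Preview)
Your proof is correct and follows essentially the same approach as the paper: both arguments verify the first claim by direct computation from the definitions and invoke stationarity, and both establish the total variation bound by combining convexity (the weights $\mu_w/\nu_i$) with the contraction of total variation under the pushforward $\Gamma$. The paper phrases the second step via an auxiliary random variable $\tilde X_0$ distributed as $\mu$ conditioned on $V_i$, whereas you unfold the same convex combination explicitly in the formula for $q_{i,j}$; these are the same argument written two ways.
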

\begin{proof}
Since both chains are stationary, it suffices to show the statement for $k=0$. In this case, we notice that for every $i$ we have 
$$
\mathbb P(\Gamma(X_0)=i)=\sum_{w\in V_i}\mu_w=\sum_{w\in V_i}\frac{c_w}{\sum_{w'\in V}c_{w'}}=\frac{C_i}{\sum_{0\leq j\leq m} C_{j}}=\nu_i=\mathbb P(Y_0=i)
$$
To check the distribution of pairs are the same, it suffices to show that the transition probabilities coincide, i.e., we need to show
$$
\mathbb P(\Gamma(X_{k+1})=j|\Gamma(X_k)=i)=q_{i,j}.
$$
This is just an easy computation. 
$$
P(\Gamma(X_{k+1})=j|\Gamma(X_k)=i)=\sum_{w\in V_i, w'\in V_j}\frac{c_{w,w'}}{c_w}\frac{c_w}{\sum_{a\in V_i}c_a}=\frac{C_{i,j}}{C_i}=q_{i,j}.
$$
It remains to prove the total variation bound. Fix $1\leq i\leq m$ and consider $\tilde X_0$ that is distributed according to the measure $\tilde \mu$
$$
\tilde \mu(w)=\frac{\mu_w1_{w\in V_i}}{\mu(V_i)},
$$
i.e. 
Our previous computation shows that the pair $(\Gamma(X_0), \Gamma(X_1))$ has the same law of $(Y_0, Y_1)$, where $Y_0=i$. Therefore, we have
$$
d_{\emph{TV}}(\nu(1,i),\nu)\leq d_{\emph{TV}}(\tilde \mu, \mu)\leq \max_{w\in V}d_{\emph{TV}}(\mu(1,w), \mu)
$$
where we used the convexity of total variation distance in the last inequality.
\end{proof} 
The lemma does not extend to triples, in the following sense: if we condition on $X_0=i$ and $X_1=j$, the distribution of $X_2$ is typically not identical to that of $Y_2$ conditioned on $Y_0=i$ and $Y_1=j$. 
\subsection{Construction of the coupling}\label{coupling}
Start with $X_0=w\in V_i$ and $Y_0=i$, so that in particular $\Gamma(X_0)=Y_0$. Heuristically, we want to construct a coupling such that $\Gamma(X_k)$ and $Y_k$ agree for as long as possible. In case things go wrong, we want to re-couple them as quick as possible, and we will do so by guaranteeing that both the chains reach their stationary states as fast as they can.
\\ \\ 
Let $w=X_0, X_1, \ldots$ be a realization of the Markov chain $X_k$. To describe the coupling, we introduce the independent random variables 
$$
\tau_d\sim \text{Geo}(\varepsilon),\quad  \tau_c, \tau'_c\sim \text{Geo}(1-\lambda).
$$
First we notice that by definition of $\lambda$ and the coupling interpretation of total variation distance (see, e.g., Theorem $5.2$ in \cite{peres}), we can construct a stationary walker $\tilde X_k$ so that
$$
X_k=\tilde X_k, \quad k\geq \tau_c. 
$$
On the other hand, owing to \eqref{convenient} we can guarantee that
$$
Y_k=\Gamma(X_k), \quad k<\tau_d
$$
By means of Lemma \ref{lem:1}, if $\tau_c<\tau_d$ we can then set $Y_k=\Gamma(X_k)$ for all $k$. If, instead, $\tau_d\leq \tau_c$, we define $Y_k$ for $k>\tau_d$ as follows.  
\begin{enumerate}
\item For $\tau_d<k\leq \tau_c$, move the two chains independently. If only one of the events $\{T=k\}$ or $\{T'=k\}$ occurs in this range, then move the two chains independently after that. 
\item For $\tau_c<k\leq \tau_c+\tau'_c$, use Lemma \ref{lem:1} to couple $Y_k$ with $X_k=\tilde X_k$ so that $\Gamma(X_{\tau_c+\tau'_c})=Y_{\tau_c+\tau'_c}$. If only one of the events $\{T=k\}$ or $\{T'=k\}$ occurs in this range, then move the two chains independently after that. 
\item If $k>\tau_c+\tau_c'$ and neither $T$ nor $T'$ occurred by time $k$, set $Y_k=\Gamma(X_k)=\Gamma(\tilde X_k)$, which is possible owing to Lemma \ref{lem:1}.
\end{enumerate}

In particular, our coupling guarantees that the inclusion of events
\begin{equation}\label{inclusion}
\{T\neq T'\}\subset \{T\neq T', \tau_d\leq \min(T,T')<\tau_c+\tau'_c\}.
\end{equation}
Moreover, if we write  
\begin{equation}\label{indephelps}
T-T'=\left(\max(T,T')-\min(T,T')\right)1_{T\neq T'},
\end{equation}
we see that, because of the construction of the coupling and the Markov property, $\max(T,T')-\min(T,T')$ depends on $\min(T,T')$ only through $X_{\min(T,T')}, Y_{\min(T,T')}$. \\ \\

\section{Proofs}
\subsection{Proof of Theorem \ref{thm:main}}
\begin{proof}
We start by estimating $\mathbb P(T\neq T')$. Using the inclusion of events in \eqref{inclusion}, we can write
$$
\mathbb P(T\neq T')\leq \sum_{k=1}^{\infty} \mathbb P(T\neq T',  \tau_d\leq k<\tau_c+\tau'_c, \min(T,T')=k)
$$
For $k$ small (to be decided), we will estimate 
using a union bound
$$
\mathbb P(\min(T,T')=k, T\neq T', \tau_d\leq k)\leq k\varepsilon\max_{1\leq s\leq k}\mathbb P(\min(T,T')=k, T\neq T'|\tau_d=s)
$$
For each $1\leq s\leq k-1$, we can further condition on $X_{k-1}$ and $Y_{k-1}$ and use a union bound to deduce
$$
\mathbb P(\min(T,T')=k|\tau_d=s)\leq \max_{w'\sim v} p_{w', v}+\max_{j\sim 0}q_{j,0}\leq 2\frac{K_2}{|V|}
$$
where the last inequality follows from \eqref{convextrick} and the definition \eqref{ass2} of $K_2$.
If, instead, $\tau_d=k$, we can write 
\begin{align*}
\mathbb P(\min(T,T')=k, T\neq T'|\tau_d=k)&\leq \mathbb P(\Gamma(X_k)=0|\tau_d=k, \Gamma(X_{k-1})\neq 0)\\&+\mathbb P(Y_k=0|\tau_d=k, Y_{k-1}\neq 0)
\end{align*}
The first term can be handled using the fact that $\tau_d$ is independent of $X_k$, and thus
$$
\mathbb P(\Gamma(X_k)=0|\tau_d=k, \Gamma(X_{k-1})\neq 0)\leq \max_{w'\sim v}p_{w', w}\leq \frac{K_2}{|V|} 
$$
As for the second term, we can use \eqref{ass1} and \eqref{convenient} to write 
$$
\mathbb P(Y_k=0|\tau_d=k, Y_{k-1}\neq 0)\leq  \mathbb P(\Gamma(X_k)=0|\Gamma(X_{k-1})\neq 0)\leq \max_{w'\sim v}p_{w', w}\leq \frac{K_2}{|V|}. 
$$
Overall, we have deduced the bound 
$$
\mathbb P(\min(T,T')=k, T\neq T', \tau_d\leq k)\leq 2k\varepsilon \frac{K_2}{|V|}.
$$
that will be useful for small $k$. As for large $k$, we use the bound
\begin{align*}
\mathbb P(\min(T,T')=k, T\neq T', k<\tau_c+\tau_c')\leq \mathbb P(k<\tau_c+\tau_c')\leq 2\mathbb P(\tau_c>\frac{k}{2})\leq 2(\sqrt\lambda)^k
\end{align*}
Collecting all bounds, we obtain that for every positive integer $z$ we can bound
\begin{align*}
\mathbb P(T\neq T')&\leq \sum_{k=1}^{\infty} \mathbb P(\min(T,T')=k, T\neq T',\tau_d\leq k<\tau_c+\tau_c')\\&\leq \sum_{k=1}^{z-1}\mathbb P(\min(T,T')=k, T\neq T', \tau_d\leq k)+\sum_{k=z}^{\infty}\mathbb P(\tau_c+\tau'_c>k)
\\&\leq 2z^2\varepsilon \frac{K_2}{|V|} +2\frac{(\sqrt\lambda)^z}{1-\sqrt\lambda}
\end{align*}
The choice of 
$$z=\frac{2}{\log\lambda}\cdot \log\left(\varepsilon \frac{K_2}{|V|}(1-\sqrt{\lambda})\right)
$$
gives 
$$
\mathbb P(T\neq T')\leq \frac{16\varepsilon}{\log^2\lambda}\cdot \frac{K_2}{|V|}\cdot \log^2\left(\varepsilon \frac{K_2}{|V|}(1-\sqrt{\lambda})\right)
$$
Using the identity in \eqref{indephelps} and the definition of $K_1$ from \eqref{ass2}, we conclude
\begin{align*}
|\mathbb E[T]-\mathbb E[T']|&\leq \mathbb E|T-T'|\leq \max_{w'\in V, 0\leq j\leq m}\left(\mathbb E[T_{w',v}], \mathbb E[\tilde T_{j,0}] \right)\mathbb P(T\neq T')\\&\leq K_1|V|\mathbb P(T\neq T')\leq \frac{16K_1K_2\varepsilon}{\log^2\lambda}\log^2\left(\varepsilon \cdot \frac{K_2}{|V|}\cdot (1-\sqrt{\lambda})\right).
\end{align*} 
\end{proof}
\subsection{Proofs of Corollaries \ref{thm:sbm} and \ref{thm:er}}
In the proofs of both Corollaries, we will make heavy use of the classical Chernoff bound for binomial random variables. We have that for any $N$ and $0<r<1$ (possibly depending on $N$) a random variable $X\sim \text{Bin}(N, r)$ satisfies 
$$
\mathbb P\left(|X-Nr|\geq \sqrt{3cNr\log N}\right)\leq \frac{1}{N^c}.$$
for every $c$ (while this is not optimal for $r$ close to $1$, it will suffice for our purposes). In particular, if we have a family of random variables of size at most polynomially in $N$, by taking $c$ sufficiently large we can ensure that, with high probability, they all lie in a neighbor of $Nr$ of size at most $\mathcal O\left(\sqrt{Nr\log N}\right)$, uniformly in $r$.
\begin{proof}[Proof of Corollary \ref{thm:sbm}]
Given $G\sim G_{n,p,q}$ and a vertex $v$, let $A$ denote the set of vertices distinct from $v$ in the same community of $v$, and let $B$ denote the set of vertices that are not in the same community of $v$ (thus $|A|=n-1$, $|B|=n$). Notice that, for parameters $p, q$ that are bounded away from zero, we are guaranteed that the graph have diameter $2$ with high probability \cite{chu}. Now, define
$\mathcal V=\{V_0, V_1, V_2, V_3, V_4\}$ where 
\begin{align*}
&V_0=\{v\}, \\ & V_1=\{w\in A: w\sim v\},\\ & V_2=\{w\in B: w\sim v\} \\ &V_1=\{w\in A: w\not\sim v\},\\ & V_2=\{w\in B: w\not\sim v\}
\end{align*}
In particular, this gives
$$
|V_1|=\text{Bin}(n-1,p), \quad |V_2|=\text{Bin}(n,q), \quad |V_3|=n-1-|V_1|, \quad |V_4|=n-|V_2|.
$$
and obviously $|V_0|=1$. We now proceed to show that Theorem \ref{thm:main} is in force with $\lambda, K_1, K_2$ that are independent of $n$, and $\varepsilon\rightarrow 0$. In what follows, the implicit constant in $\mathcal O$ may depend on $p$ and $q$. First, notice that for every vertex $w$ 
$$
\deg(w)=\text{Bin}(n-1, p)+\text{Bin}(n,q), 
$$
and thus a Chernoff bound gives with high probability
\begin{equation}\label{degreebound}
\frac{\deg(w)}{2n}=\frac{p+q}{2}\left(1+\mathcal O\left(\sqrt\frac{\log n}{n}\,\right)\right)
\end{equation}
for all $w\in V$. A similar argument implies that with high probability $w\in V_1$ the fraction of edges leading to $V_2$, whose size is $$|V_2|=nq\left(1+\mathcal O\left(\sqrt{\frac{\log n}{n}}\,\right)\right)$$ 
satisfies the uniform bound
$$
\frac{|w':w'\in V_2, w'\sim w|}{\deg w}=\frac{\text{Bin}(|V_2|, q)}{\text{Bin}(n-1,p)+\text{Bin}(n,q)}=\frac{q^2}{p+q}\left(1+\mathcal O\left(\sqrt\frac{\log n}{n}\right)\right),
$$
which is independent of $w\in V_1$.
A similar argument for each pair $0\leq i, j\leq 4$ shows that Assumption \ref{ass} holds with 
$$
\varepsilon=\mathcal O\left(\sqrt\frac{\log n}{n}\right).
$$
We now proceed to bound $\lambda, K_1, K_2$, and we do so by applying Lemma \ref{mariangela}. In fact, we have 
from \eqref{degreebound} that 
$$D=\frac{p+q}{2},\quad  \delta=\mathcal O\left(\sqrt\frac{\log n}{n}\right).$$
Moreover, we also have
$$
\alpha=\frac{|w':w'\sim v, w
\sim w|}{\deg w}\geq \min(p,q)\left(1+\mathcal O\left(\sqrt\frac{\log n}{n}\right)\right)
$$
Applying Lemma \eqref{mariangela}, we deduce the bounds 
$$
\quad K_1\leq\frac{p+q}{pq}+o(1), \quad K_2\leq \frac{2}{p+q}+o(1),\quad   \lambda\leq 1-\frac{p+q}{2}+o(1). 
$$
Using Theorem \ref{thm:main}, we conclude  
$$
\mathbb E[T_{w,v}]-\mathbb E[T'_{i, 0}]=\mathcal O\left(\varepsilon\left(\log\left(\frac{\varepsilon}{n}\right)\right)^2\right)=\mathcal O\left(\sqrt\frac{\log^5 n}{n}\right)
$$
It remains to compute $T'_{i,0}$ for $1\leq i\leq 4$. These are just the hitting times for a weighted graph with explicit transition probabilities. 
$$
Q=\begin{pmatrix}
0 & \frac{p}{(p+q)} & \frac{q}{(p+q)} & 0 & 0 & \\
\frac{1}{(p+q)n} & \frac{p^2}{p+q}& \frac{q^2}{p+q}& \frac{p(1-p)}{p+q}& \frac{q(1-q)}{p+q}\\ 
\frac{1}{(p+q)n} & \frac{pq}{p+q}& \frac{pq}{p+q}& \frac{q(1-p)}{p+q}& \frac{p(1-q)}{p+q}\\ 
0 &\frac{p^2}{p+q}& \frac{q^2}{p+q}& \frac{p(1-p)}{p+q}& \frac{q(1-q)}{p+q}\\
0 & \frac{pq}{p+q}& \frac{pq}{p+q}& \frac{q(1-p)}{p+q}& \frac{p(1-q)}{p+q}\\
\end{pmatrix}+\mathcal O\left(\sqrt \frac{\log n }{n}\right)
$$ 
where the error term is uniform over all entries. In addition to that, we notice that the return time is exactly the inverse of the stationary distribution at $v$. Therefore, we have 
$$
\frac{2|E|}{\deg v}=1+q_{01}T'_{1,0}+q_{02} T'_{2,0}. 
$$
This suggests writing
$
T_{i,0}'=\frac{2|E|}{\deg v}-1+\tilde T_i
$
and, using the bound 
$$
\left(\frac{2|E|}{\deg v}-1\right)\frac{1}{(p+q)n}=\frac{2}{p+q}+\mathcal O\left(\sqrt \frac{\log n }{n}\right)
$$
we deduce 
$$
\begin{pmatrix}
\tilde T_1 \\ \tilde T_2 \\ \tilde T_3 \\ \tilde T_4
\end{pmatrix}=\begin{pmatrix}
1-\frac{2}{p+q}\\ 1-\frac{2}{p+q} \\ 1 \\ 1
\end{pmatrix}+B_{p,q}\begin{pmatrix}
\tilde T_1 \\ \tilde T_2 \\ \tilde T_3 \\ \tilde T_4
\end{pmatrix}+\mathcal O\left(\sqrt \frac{\log n}{n}\right)
$$
where the matrix $B_{p,q}$ is given by 
$$
B_{p,q}:=\begin{pmatrix}
\frac{p^2}{p+q}& \frac{q^2}{p+q}& \frac{p(1-p)}{p+q}& \frac{q(1-q)}{p+q}\\ 
 \frac{pq}{p+q}& \frac{pq}{p+q}& \frac{q(1-p)}{p+q}& \frac{p(1-q)}{p+q}\\ 
\frac{p^2}{p+q}& \frac{q^2}{p+q}& \frac{p(1-p)}{p+q}& \frac{q(1-q)}{p+q}\\ \frac{pq}{p+q}& \frac{pq}{p+q}& \frac{q(1-p)}{p+q}& \frac{p(1-q)}{p+q}\\
\end{pmatrix}
$$
The linear system can be solved explicitly, leading to 
$$
\begin{pmatrix}
\tilde T_1 \\ \tilde T_2 \\ \tilde T_3 \\ \tilde T_4
\end{pmatrix}=\begin{pmatrix}
-\frac{(p-q)^2}{(p+q)^2} \\ \frac{p(p-q)^2}{q(p+q)^2} \\ -\frac{(p-q)^2}{(p+q)^2}+\frac{2}{p+q} \\ \frac{q(p-q)^2}{p(p+q)^2}+\frac{2}{p+q}
\end{pmatrix}+\mathcal O\left(\sqrt \frac{\log n }{n}\right)
$$
\end{proof}
\begin{proof}[Proof of Corollary \ref{thm:er}]
First, we observe that in this regime for $n$ and $p$ the graph has diameter $2$ with high probability \cite{chu}. The proof now is similar to that of Theorem \ref{thm:sbm}, but we need to be more careful in keeping track of the various parameters. We have $$V_0=\{v\}, \quad V_1=\{w: w\sim v\}\quad V_2=V\setminus (V_1\cup V_0).$$
In the following, all statement follows from a Chernoff bound, and thus holds with high probability. For all vertices $w$, including $v$, we have
$$
\deg(w)=pn\left(1+\mathcal O\left(\sqrt\frac{\log n}{np}\right)\right).
$$
In particular, 
$$
|V_1|=\deg(v)=pn\left(1+\mathcal O\left(\sqrt\frac{\log n}{np}\right)\right).
$$
The same argument shows that for every $w\neq v$ we have
\begin{align*}
&\sum_{w'\in V_1}p_{w,w'}=\frac{\text{Bin}(|V_1|-1_{w\in V_1}, p)}{\deg(w)}=p\left(1+\mathcal O\left(\sqrt \frac{\log n}{np^2}\right)\right) \\&
\sum_{w'\in V_2}p_{w,w'}=\frac{\text{Bin}(|V_2|-1_{w\in V_2}, p)}{\deg(w)}=(1-p)\left(1+\mathcal O\left(\sqrt \frac{\log n}{np}\right)\right)
\end{align*}
and for $w\in V_1$
$$
p_{w,v}=\frac{1}{\deg w}=\frac{1}{np}\left(1+\mathcal O\left(\sqrt{\frac{\log n}{np}}\right)\right)
$$
In particular, we obtain that Assumption $\ref{ass1}$ holds with 
$$
\varepsilon=\mathcal O\left(\sqrt\frac{\log n}{np^2}\right)
$$
Similarly, we can apply Lemma \ref{mariangela} with 
$$D=p,\quad \alpha=p, \quad \delta=\mathcal O\left(\sqrt\frac{\log n}{np}\right), $$
Our assumption on $p$ guarantees that $\delta=o(p)$ and thus we deduce 
$$
K_1\leq 2+o(1), \quad K_2\leq \frac{1+o(1)}{p}, \quad \lambda=1-p+o(p)
$$
and thus $$
K_1K_2\varepsilon=\mathcal O\left(\sqrt\frac{\log n}{np^4}\right)
$$
On the other hand, we also have
$$
\frac{\log(\varepsilon \frac{K_2}{|V|}(1-\sqrt{\lambda}))}{
\log\lambda}=\mathcal O\left(\frac{\log n}{p}\right)
$$
Combining all the ingredients, Theorem \eqref{thm:main} is in force and gives a bound
$$
\mathbb E[T_{w,v}]-\mathbb E[T'_{i, 0}]=\mathcal O\left(\sqrt\frac{\log n}{np^4}\frac{(\log n)^2}{p^2}\right)=\mathcal O\left(\sqrt\frac{\log n^5}{np^8}\right)
$$
that goes to zero by our assumption. It remains to compute the hitting times for $T_{1,0}$ and $T_{2,0}$. From the return time to $0$, we can deduce
$$
\frac{2|E|}{\deg(v)}=1+T_{1,0},
$$
while from 
$$
T_{2,0}=1+q_{2,1}T_{1,0}+(1-q_{2,1})T_{2,0}
$$
we obtain that the difference of the hitting times is
$$
T_{2,0}-T_{1,0}=\frac{1}{q_{2,1}}.
$$
It remains to compute $q_{2,1}$. If we let $X$ denote the number of edges between $V_1$ and $V_2$, and by $Y$ denote the number of edges between $V_2$ and itself, we obtain $q_{2,1}=\frac{X}{X+Y}$. By the very definition of the model, $X$ and $Y$ are independent and  
\begin{align*}
X\sim \text{Bin} \left(\deg v(n-\deg v-1),p\right)\, \quad Y\sim \text{Bin}\left((n-\deg v-1)(n-\deg v-2),p\right).
\end{align*} 
\end{proof}
\section*{Acknowledgments}
We warmly thank Stefan Steinerberger for many useful discussions. We were also supported by an AMS-Simons travel grant.


\begin{thebibliography}{10}
\bibitem{abbe} E. Abbe. Community detection and stochastic block models: recent developments. Journal of Machine Learning Research 18.177 (2018): 1-86.

\bibitem{berg} C. T. Bergstrom, M. Rosvall.  Maps of random walks on complex networks reveal community structure. Proc. Natl. Acad. Sci. USA 105, 1118–1123 (2008).

\bibitem{bho} S. S. Bhowmick, B. S. Seah. Clustering and summarizing protein-protein interaction networks: A survey. IEEE Trans. Knowl. Data Eng. 28, 638–658 (2015).

\bibitem{sourav} S. Chatterjee, K. Rohe, B. Yu, Spectral clustering and the high-dimensional stochastic blockmodel. Ann. Statist. 39(4): 1878-1915 (2011).

\bibitem{chu} F. Chung, L. Linyuan, The diameter of sparse random graphs. Advances in Applied Mathematics 26.4 (2001): 257-279.

\bibitem{clauset} A. Clauset, M. E. Newman, C. Moore. Finding community structure in very large networks. Phys. Rev. E 70, 066111 (2004).


\bibitem{gestalt} A. Desolneux, L. Moisan, and J. Morel. From gestalt theory to image analysis: a probabilistic approach. Vol. 34. Springer Science \& Business Media, 2007.

\bibitem{persi} P. Diaconis, L. Miclo. On quantitative convergence to quasi-stationarity. Annales de la Faculte des sciences de Toulouse: Mathematiques. Vol. 24. No. 4. 2015.

\bibitem{Ni} J. Gao, CC. YY. Lin, F. Luo, CC. Ni, Community Detection on Networks with Ricci Flow, Sci Rep 9, 9984 (2019)

\bibitem{hel} A. Helali and M. L\"owe, 
Hitting times, commute times, and cover times for random walks on random hypergraphs.
Statist. Probab. Lett. 154 (2019), 108535, 6 pp.





\bibitem{kir} G. Kirchhoff,
\"Uber die Aufl\"osung der Gleichungen, auf welche man bei der Untersuchung der linearen Vertheilung
galvanischer Str\"ome gef\"uhrt wird. Ann. Phys. Chem. 72 (1847), 497–508

\bibitem{klee} V. Klee and D. Larman, Diameters of random graphs, Canadian Journal of Mathematics 33.3 (1981): 618-640.

\bibitem{peres} D. Levin, Y. Peres. Markov chains and mixing times. Vol. 107. American Mathematical Soc., 2017.

\bibitem{lov} L. Lov\'asz,
Random walks on graphs: a survey. (English summary) Combinatorics, Paul Erdős is eighty, Vol. 2 (Keszthely, 1993), 353--397,
Bolyai Soc. Math. Stud., 2, János Bolyai Math. Soc., Budapest, 1996.

\bibitem{lowe} M. L\"owe and F. Torres, On hitting times for a simple random walk on dense Erd\H{o}s-R\'enyi random graphs. Statistics \& Probability Letters 89 (2014), p. 81--88.

\bibitem{lowe2} M. L\"owe and S. Terveer, A Central Limit Theorem for the average target hitting time for a random walk on a random graph. arXiv preprint arXiv:2104.01053.

\bibitem{lowe3} M. L\"owe and S. Terveer, A central limit theorem for the mean starting hitting time for a random walk on a random graph. Journal of Theoretical Probability, 36(2), 779-810.

\bibitem{lowe4} M. L\"owe and S. Terveer, Spectral properties of the strongly assortative stochastic block model and their application to hitting times of random walks, arXiv preprint arXiv:2104.01053 


\bibitem{von} U. von Luxburg, A. Radl and M. Hein, Hitting and commute times in large random neighborhood graphs. The Journal of Machine Learning Research, 15 (2014), 1751--1798.

\bibitem{lyo} R. Lyons, Y. Peres, Probability on trees and networks.  Cambridge University Press, 2017.

\bibitem{oll} Y. Ollivier, Ricci curvature of metric spaces." Comptes Rendus Mathematique 345.11 (2007): 643-646.

\bibitem{ott} A. Ottolini, S. Steinerberger. Concentration of Hitting Times in Erd\H{o}s-Rényi graphs, to appear on Journal of Graph Theory. 

\bibitem{ruelle} D. Ruelle. Statistical mechanics: rigorous results. World Scientific, 1999.

\bibitem{sax} A. Saxena et al, A review of clustering techniques and developments, Neurocomputing 267 (2017): 664-681.

\bibitem{sood} V. Sood, S. Redner and D. ben-Avraham, 
First-passage properties of the Erdős-Renyi random graph. 
J. Phys. A 38 (2005), no. 1, 109--123.

\bibitem{sylvester} J. Sylvester, 
Random walk hitting times and effective resistance in sparsely connected Erd\H{o}s-R\'enyi random graphs. 
J. Graph Theory 96 (2021), no. 1, 44--84.




\end{thebibliography}
\end{document}